\def\E{\mathbb{E}}
\def\E{\mathbbmss{E}}
\def\P{\mathbbmss{P}}
\def\Normal{\mathcal{N}}
\def\I{\mathbbmss{1}}
\def\bx{\mathbf{x}}
\def\bX{\mathbf{X}}
\def\bx{\mathbf{x}}
\def\bX{\mathbf{X}}
\def\Tpop{T^{\text{pop}}}
\def\Tpop{T^{\text{pop}}}
\def\bSig\mathbf{\Sigma}
\renewenvironment{proof}[1][Proof]{\begin{trivlist}
\item[\hskip \labelsep {\bfseries #1}]}{\end{trivlist}}
\renewcommand{\qed}{\nobreak \ifvmode \relax \else
      \ifdim\lastskip<1.5em \hskip-\lastskip
      \hskip1.5em plus0em minus0.5em \fi \nobreak
      \vrule height0.75em width0.5em depth0.25em\fi}
\def\Tpop{T^{\text{pop}}}
\begin{document}

\title{The Propensity Score Estimation in the Presence of Length-biased Sampling: A Nonparametric Adjustment Approach}
%  This will produce the submission and review information that appears
%  right after the reference section.  Of course, it will be unknown when
%  you submit your paper, so you can either leave this out or put in
%  sample dates (these will have no effect on the fate of your paper in the
%  review process!)
\markboth{ A. Ertefaie, M. Asgharian and D. Stephenes }{Miscellanea}

\author{Ashkan Ertefaie}
\affil{Department of Statistics, University of Michigan, Ann Arbor, Michigan, USA \email{ertefaie@umich.edu} }

\author{Masoud Asgharian \and David Stephens}
\affil{Department of Mathematics and Statistics, McGill University, Montreal, Quebec, Canada. \email{masoud@math.mcgill.ca}  \email{dstephens@math.mcgill.ca}}

\maketitle

%  These options will count the number of pages and provide volume
%  and date information in the upper left hand corner of the top of the
%  first page as in published papers.  The \pagerange command will only
%  work if you place the command \label{firstpage} near the beginning
%  of the document and \label{lastpage} at the end of the document, as we
%  have done in this template.

%  Again, putting a volume number and date is for your own amusement and
%  has no bearing on what actually happens to your paper!

%  The \doi command is where the DOI for your paper would be placed should it
%  be published.  Again, if you make one up and stick it here, it means
%  nothing!

%%\doi{10.1111/j.1541-0420.2005.00454.x}

%  This label and the label ``lastpage'' are used by the \pagerange
%  command above to give the page range for the article.  You may have
%  to process the document twice to get this to match up with what you
%  expect.  When using the referee option, this will not count the pages
%  with tables and figures.

\label{firstpage}

%  put the summary for your paper here

\begin{abstract}

%We consider estimating the propensity score  from observational data when the data constitute a length-biased sample from the target population. Length-bias in survival data occurs in observational studies when, for example, subjects with shorter lifetimes are less likely to be present in the recorded data. Thus, depending on the association between covariates and survival time, some covariates will be under or overrepresented in the observed sample. \cite{cheng2012estimating} introduced a method that adjusts for this bias which requires the correct conditional survival/hazard function given the treatment and covariates. We introduce a nonparametric adjustment technique  based on a weighted estimating equation for estimating the propensity score which does not require any modeling assumption for the conditional survival function. Large sample properties of the estimator is established and its small sample behaviour is studied using simulations. 

The pervasive use of prevalent cohort studies on disease duration, increasingly calls for appropriate methodologies to account for the biases that invariably accompany samples formed by such data. It is well-known, for example, that subjects with shorter lifetime are less likely to be present in such studies. Moreover, certain covariate values could be preferentially selected into the sample, being linked to the long-term survivors. The existing methodology for estimation of the propensity score using data collected on prevalent cases requires the correct conditional survival/hazard function given the treatment and covariates. This requirement can be alleviated if the disease under study has stationary incidence, the so-called stationarity assumption. We propose a nonparametric adjustment technique based on a weighted estimating equation for estimating the propensity score which does not require modeling the conditional survival/hazard function when the stationarity assumption holds. Large sample properties of the estimator is established and its small sample behavior is studied via simulation.

\begin{keywords}
Propensity score; Length-biased sampling; Causal inference;
Length-biased sampling. 
\end{keywords}

\end{abstract}

%  Please place your key words in alphabetical order, separated
%  by semicolons, with the first letter of the first word capitalized,
%  and a period at the end of the list.
%

%  As usual, the \maketitle command creates the title and author/affiliations
%  display

%  If you are using the referee option, a new page, numbered page 1, will
%  start after the summary and keywords.  The page numbers thus count the
%  number of pages of your manuscript in the preferred submission style.
%  Remember, ``Normally, regular papers exceeding 25 pages and Reader Reaction
%  papers exceeding 12 pages in (the preferred style) will be returned to
%  the authors without review. The page limit includes acknowledgements,
%  references, and appendices, but not tables and figures. The page count does
%  not include the title page and abstract. A maximum of six (6) tables or
%  figures combined is often required.''

%  You may now place the substance of your manuscript here.  Please use
%  the \section, \subsection, etc commands as described in the user guide.
%  Please use \label and \ref commands to cross-reference sections, equations,
%  tables, figures, etc.
%
%  Please DO NOT attempt to reformat the style of equation numbering!
%  For that matter, please do not attempt to redefine anything!

\section{Introduction}
\label{s:intro}

Survival or failure time data typically comprise an initiating event, say onset of a disease, and a terminating event, say death. In an ideal situation, recruited subjects have not experienced the initiating event, the so-called
\textit{incident} cases. These cases are then followed to a terminating event or censoring, say loss to follow-up. In many practical situations, however, recruiting incident cases is infeasible due to logistic or other constraints. In such circumstances, subjects who have experienced the initiating event prior to the start of the study, so-called \textit{prevalent} cases, are recruited. It is well known that these cases tend to have a longer survival time, and hence form a biased sample from the target population. This bias is termed \textit{length bias} when the initiating events are generated by a stationary Poisson process (\cite{cox1966statistical}, \cite{zelen1969theory}), the so-colled {\it{stationarity}} assumption.  %This bias in sampling can lead to bias in the estimation of the propensity score and an exposure effect of interest.

In observational studies, treatment is assigned to the experimental units without randomization. Thus, in each treatment group, the covariate distributions may be imbalanced which may lead to bias in estimating the treatment effect if the covariate imbalance is not properly taken into account (\cite{cochran1973controlling}, \cite{rubin1973use}). Propensity score is a tool that is widely used in causal inference to adjust for this source of bias (see \cite{robins2000marginal}, \cite{hernan2000marginal}). \cite{rosenbaum1983central} define the propensity score for a binary treatment $D$ as $p(D=1|\bX)$ where $\bX$ is a vector of measured covariates. They show that under some assumptions, treatment is independent of the covariates inside each propensity score stratum (the {\it balancing property} of the propensity score).

In cases where the sample is not representative of the population, naive propensity score estimation may not have the balancing property. \cite{cheng2012estimating} develop a method that consistently estimates the parameters of the propensity score from prevalent survival data. They also, present a method that can be used in a special case of length-biased sampling. Their method requires correct specification of the conditional hazard model given the treatment and covariates. We refer to their estimator as $CW$ in the sequel.

Our goal is to develop a method that estimates the propensity score using a weighted logistic regression where weights are estimated nonparametrically. Our estimating equation is designed specifically for the length-biased data, i.e., disease with stationary incidence. Unlike the method proposed by  CW, our method does not require any model specification for the conditional failure time given the exposure and the covariates. %We also generalize a nonparametric survival curve estimation method introduced by \cite{huang2011nonparametric} to accommodate the confounding effect as well as the length-biased sampling.

%This work is motivated by the Canadian Study of Health and Aging (CSHA).  In this study, individuals aged 65 or over were sample at random across Canada from communities and institutions for the elderly. Our sample includes individuals who  diagnosed with dementia  at the time of recruitment (prevalent cases). It is known that the CSHA data constitute a length-biased sample from the target population (see \cite{addona2006formal} and \cite{asgharian2006checking}). The scientific question of interest  is to compare  survival with dementia between two groups of patients, i.e., institutionalized, and those recruited from the community. To address this question, we focus on individuals whose date of institutionalization is before their onset of the decease. Since there are some covariates which confound the grouping effect on the survival time, the crude difference estimator will be biased. Hence, it is necessary to fit a propensity score first.

Studies on length-biased sampling can be traced as far back as
\cite{wicksell1925corpuscle} and his corpuscle problem. The phenomenon was
later noticed by \cite{fisher1934effect} in his article on methods of
ascertainment. \cite{neyman1955statistics} discussed length-biased sampling
further and coined the term {\it{incidence-prevalence}} bias.  \cite{cox1969some}
studied length-biased sampling in industrial applications,
while \cite{zelen1969theory} observed the same bias in screening tests for
disease prevalence (\cite{asgharian2002length}, \cite{2005}).  More recently, \cite{shen2009analyzing},
\cite{qin2010statistical} and \cite{ning2010non} have studied the analysis of covariates
under biased sampling.  

\section{Length-biased sampling}

\label{sec:l-bsamp}

In this section, we introduce concepts and notations necessary to formulate
problems involving length-biased sampling.  We adopt the common modeling
framework for prevalent cohort studies. We assume that affected individuals in
the study population develop the condition of interest (\textit{onset})
according to some stochastic mechanism at random points in calendar time, and
undergo a terminal event (\textit{failure}) at some subsequent time point that
is also determined by a stochastic mechanism. Individuals enter into the study
at some census time, and are followed up until the terminal or censoring event.

\subsection{Notations}

Let $T^{pop}$ be the time measured from the onset to failure time in the target
population with an absolutely continuous distribution $F$ and density $f$. Also, let  $D^{pop}$ and $\bX^{pop}$ be the binary treatment variable and the vector of covariates, respectively. Let $T$ be the same measured time
for \textbf{observed} subjects with distribution $F_{LB}$. The variables with superscript $pop$ represent the population variables; variables without $pop$ denote the observed truncated variables. It is well known
that if the onset times are generated by a stationary Poisson process, then
\begin{equation}
F_{LB}(t)= \frac{\displaystyle{\int_0^t} s \: dF(s)}{\displaystyle{\int_0^\infty} s \: dF(s)} = \frac{1}{\mu} \int_0^t s \: dF(s) \qquad \text{and} \qquad f_{LB}(t) = \frac{t f(t)}{\mu} \label{eq:LBdist}
\end{equation}
where $f_{LB}$ is the density function of $F_{LB}$ and $\mu$ is the mean survival time under $F$. The observed event time, $T$, can be written $A+R$, when $A$ is the time from the onset of the disease to the recruitment time, and $R$, the residual life time, is the time from recruitment to the event, also called backward and forward recurrence times, respectively. When the individuals are also subject to right-censoring, the observed survival time is $Y=A+\min(R,C)$, where $C$ is the censoring time measured from the recruitment to the loss to follow-up; for all subjects, both $A$ and $\min(R,C)$ are observed.  The censoring indicator is denoted by $\delta$ ($\delta=1$ indicating failure).  The sample consists of $(y_i, a_i, \delta_i, d_i, \bx_i)$ for $n$ independent subjects. The following diagram illustrates the different random quantities introduced in this Section.

\vspace{.1in}
\centerline{
\xymatrix@1{
  \ar@{|<->|}@<1ex>[rrrrrrrr] | -{T=A+R} &  &  &   \ar@{|<->|}@<1ex>[lll] | -{A}^ >{\text{Onset}} ^ <{\text{Recruitment}}  \ar@{|<->|}@<3ex>[rrrr] | -{C} ^>{ \circ}&   &  &  &    &   \ar@{|<->|}@<1ex>[lllll] | -{R} ^ <{\times}\\
   }
}
\vspace{.1in}

Throughout the paper, we assume that the following assumptions  hold:
\begin{itemize}
\item[] {\it{A1.}} The variable $(T^{pop},D^{pop},\bX^{pop})$ is independent of the calendar time of the onset of the disease. 
\item[] {\it{A2.}} The disease has stationary incidence, i.e., the disease incidence occurs at a constant rate.
\item[] {\it{A3.}} The censoring time $C$ is independent of $(A,R)$ given the covariates $(D,\bX)$.
\item[] {\it{A4.}} The censoring time $C$ is independent of the covariates $(D,\bX)$.
\end{itemize}

Let $f(t|d,\bx)$ be the unbiased conditional density of survival times given the covariates and treatment. Then, under assumptions $A1$ and $A2$, the joint density of  $(A,T)$ given $(D,\bX)$ is $f(t|d,\bx)/\int_0^\infty u f(u|d,\bx) du  I(t>a>0)$ as shown in \cite{asgharian2006checking}. The assumptions $A3$ and $A4$ is used to show that
\[
p(Y \in (t,t+dt), A \in (a,a+da), \delta=1 |d,\bx)=\frac{f(t | d,\bx) S_c(t-a) dtda}{\mu_d(\bx,\theta)},
\]
where $\mu_d(\bx,\theta)=\int_0^\infty p(\Tpop\geq a|D=d,\bX=\bx,\theta)$ and $S_c(.)$ are the conditional counterfactual mean failure time if treated at $D = d$ and the survival function for the residual censoring variable $C$, respectively. By integrating the above equation over $0<a<t$, we have 
\begin{align}
p(Y \in (t,t+dt), \delta=1 |d,\bx)=\frac{f(t | d,\bx) w(t) dt}{\mu_d(\bx,\theta)},
\label{eq:shen1}
\end{align}
where $w(t) = \int_0^t S_C(s) \: ds$ (see \cite{shen2009analyzing} and \cite{qin2010statistical}).

\section{Propensity score estimation  under length-biased sampling}
\label{se:PSest}
Assuming a logit model for the propensity score in the target population, we have
\begin{align}
\pi(\bx,\alpha)=p(D^{pop}=1|\bX^{pop}=\bx) = \frac{\exp( \alpha \bx)}{1+\exp( \alpha \bx)}. \label{eq:unbps}
\end{align}
where $\alpha$ is a $p\times1$ vector of parameters. The vector of covariates $X$ may include a column of 1s. It can be shown that  under assumption $A2$, we have
\begin{align}
    p_{LB}(D=1|\bX=\bx) & =  \frac{\mu_1(\bx,\theta)p(D^{pop}=1|\bX^{pop}=\bx)}{\mu(\bx,\alpha,\theta)}, \label{eq:dlb}
\end{align}
where $\mu(\bx,\alpha, \theta)= \pi(\bx,\alpha)\mu_1(\bx,\theta)+(1-\pi(\bx,\alpha))\mu_0(\bx,\theta)$ (\cite{bergeron2008covariate}).

 Assuming the proportional hazard model, i.e., $\lambda_{T^{pop}}(u|D^{pop}=d, \bX^{pop}=\bx)=\lambda_0(u) e^{\gamma d + \beta \bx}$, \cite{cheng2012estimating} show that the parameter of the propensity score can be consistently estimated using the logistic regression but adjusted for the `offset' term $\log(\hat \alpha(x;\hat \Lambda,\hat \gamma,\hat \beta))$ as the intercept where $\hat \alpha(x;\hat \Lambda,\hat \gamma,\hat \beta) = \frac{\sum_{i=1}^n \exp[-\hat \Lambda(a_i) \exp(\hat \gamma  + \hat \beta \bx)}{ \sum_{i=1}^n \exp[-\hat \Lambda(a_i) \exp( \hat \beta \bx)]}$.
The cumulative hazard function  $\Lambda$ is estimated using the Breslow estimator, and the parameters $(\gamma,\beta)$ can be estimated using   the estimating equations developed by \cite{qin2010statistical}. The consistency of the parameters of the propensity score in the CW method relies on the correct  specification of the conditional hazard model given the treatment and covariates.

%In causal inference,  when the marginal causal effect is the parameter of interest, investigators may prefer not to  specify the conditional hazard model. So it seems useful to develop a method that can account for the length-biased sampling without requiring any  assumptions on the conditional hazard model.

When the initiating event of the duration variable has stationary incidence, it is possible to devise a robust methodology for estimating the propensity score that does not require knowledge of the conditional hazard model. See among others \cite{wolfson2001reevaluation} and \cite{de2004nonparametric} for examples of such duration variables in medical and labor force studies, respectively.

 We construct an unbiased
estimating equation for estimating the parameters of the propensity score using the weighted logistic regression where weights are estimated nonparametrically. %Since unbiasedness of the estimating equation  does not require any  assumptions on the conditional hazard model, it is particularly of interest when the marginal causal effect is the parameter of interest. 
Let  $F(d|\bx)$ be the unbiased conditional distribution of the treatment given the covariates.
%then {\small{
%\begin{eqnarray*}
%\E \left[\delta \frac{(D-\pi(\bx,\alpha))}{w(Y)}\:\bigg\vert\:\bX \right]&=&
%\frac{1}{\mu(D,\bX)}\int \int \frac{(D-\pi(\bx,\alpha))}{w(y)} f_U(y|\bX,D) w(y) \: dy \: dF_U(d|\bX) \\[6pt]
%&=& \frac{1}{\mu(D,\bX)}\int \int (D-\pi(\bx,\alpha)) f_U(y|\bX,D) \: dy \: dF_U(d|\bX)\\[6pt]
%& = & 0,
%\end{eqnarray*} }}
%then {\small{
%\begin{eqnarray*}
%\E \left[\delta \frac{(D-\pi(\bx,\alpha))}{w(Y)}\:\bigg\vert\:\bX \right]&=& \E \left[\E\left\{\delta \frac{(D-\pi(\bx,\alpha))}{w(Y)}|\bD,\bX\right\}\right] \\
%&=&\frac{1}{\mu(\theta,\bX)}\E \left[ D-\pi(\bx,\alpha) |\bX \right] \int \frac{1}{w(y)} f_U(y|\bX,D) w(y) \: dy\\[6pt]
%&=& \frac{1}{\mu(\theta,\bX)}\E \left[D-\pi(\bx,\alpha) |\bX \right]\\[6pt]
%& = & 0,
%\end{eqnarray*} }}
Then
\begin{eqnarray*}
\E \left[\delta \frac{(D-\pi(\bx,\alpha))}{w(Y)}\:\bigg\vert\:\bX=\bx \right]&=& \E \left[\E\left\{\delta \frac{(d-\pi(\bx,\alpha))}{w(Y)}\:\bigg\vert\:D=d,\bX = \bx \right\}\right] \\
&=& \int (d-\pi(\bx,\alpha)) \int \frac{f(y|\bx,d) w(y)}{w(y)\mu_d(\bx,\theta)}   \: dy \\
& &\hspace{1.5in} \times\: \frac{\mu_d(\bx,\theta)dF(d|\bx)}{\mu(\bx,\alpha,\theta)}\\[6pt]
&=& \frac{1}{\mu(\bx,\alpha,\theta)}\int (d-\pi(\bx,\alpha)) \: dF(d|\bx) = 0.
\end{eqnarray*}
The second equality follows from equations (\ref{eq:shen1}), and (\ref{eq:dlb}).
The last equality holds since $f(y|\bx,d)$ is a proper density and (\ref{eq:unbps}). An unbiased estimating equation for $\alpha$ is therefore
\begin{align}
U(\alpha)=\sum_{i=1}^n \delta_i \bx_i^\top \frac{(d_i-\pi(\bx_i,\alpha))}{\widehat w(y_i)}=0,
\label{eq:PSEE}
\end{align}
where $\hat{w}(y) = \int_{0}^{y} \hat{S}_{C}(s) ds$ and $\hat{S}_{C}$ is the
Kaplan-Meier estimator of the survivor function of the residual censoring
variable $C$.
%This is equivalent to the weighted logistic regression among the uncensored subjects.

%Note that, as censoring is presumed a mechanism acting independently of the all %other random quantities, the censoring

 The following theorem presents the asymptotic properties of the estimators obtained by (\ref{eq:PSEE}) in the presence of
length-biased sampling  when $w(.)$ is replaced by its estimated value.

%%%%%%%%%%%%%%%%%%%%%%%%%%%%%%%%%%%%%%%%%%%%%%%%%%%%%%%%%%THEOREM%%%%%%%%%%%%%%%%%%%%%%%%%%%%%%%%%%%%%%%%%%%%%%%%%%%%%%%%%%%%%%%%%%%%
\begin{theorem}
Let  $\widehat \alpha$ be an estimator obtained by (\ref{eq:PSEE}). Then under  conditions $C1-C6$ and assumptions $A1-A4$,   $\widehat \alpha \rightarrow \alpha_0$ in probability as $n \rightarrow \infty$. Moreover,
\[
\sqrt{n}(\widehat \alpha-\alpha) \stackrel{d}{\longrightarrow}\Normal(0, \eta({\alpha})),
\]
\label{th:PSas}
where $\eta(\alpha)$ is given in the Appendix.
\end{theorem}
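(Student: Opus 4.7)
The plan is to proceed by standard $Z$-estimation arguments, with the main technical work concentrated on handling the nonparametrically estimated weights $\widehat{w}$ derived from the Kaplan--Meier estimator $\widehat{S}_C$.

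For consistency, I would first invoke the identity derived immediately before the theorem,
\[
\E\!\left[\delta\,\bx\,\frac{D - \pi(\bx,\alpha_0)}{w(Y)}\right] = 0,
\]
which identifies $\alpha_0$ as the population zero of (\ref{eq:PSEE}) with the true weight $w$. Under conditions $C1$--$C6$, which I expect to include a compact parameter space, bounded covariates, a support restriction ensuring $w(Y)$ is bounded away from zero on the observed range, and identifiability of $\alpha_0$, the Kaplan--Meier estimator $\widehat{S}_C$ converges uniformly to $S_C$ on the relevant interval, so that $\widehat{w}\to w$ uniformly. Combining this with a uniform law of large numbers for $n^{-1}U(\alpha)$ yields uniform convergence to the population limit, from which an argzero-continuity argument gives $\widehat\alpha\to\alpha_0$ in probability.

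Next, for the asymptotic distribution, I would Taylor-expand
\[
0 = U(\widehat\alpha) = U(\alpha_0) + \frac{\partial U}{\partial \alpha^{\transpose}}(\alpha^{\ast})\,(\widehat\alpha - \alpha_0),
\]
with $\alpha^{\ast}$ on the segment between $\widehat\alpha$ and $\alpha_0$. Uniform convergence of the derivative combined with consistency of $\widehat\alpha$ implies that $-n^{-1}\partial U/\partial \alpha^{\transpose}(\alpha^{\ast})$ converges in probability to
\[
A(\alpha_0) = \E\!\left[\frac{\delta\,\bx\bx^{\transpose}\,\pi(\bx,\alpha_0)\{1-\pi(\bx,\alpha_0)\}}{w(Y)}\right],
\]
which is positive definite under $C1$--$C6$. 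It then remains to represent $n^{-1/2}U(\alpha_0)$ as an i.i.d.\ sum plus $o_P(1)$.

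The hard part will be the substitution of $\widehat w$ for $w$ in this last step. I would split
\begin{align*}
\frac{1}{\sqrt{n}}\, U(\alpha_0) &= \frac{1}{\sqrt{n}}\sum_{i=1}^{n} \delta_i \bx_i \frac{D_i - \pi(\bx_i,\alpha_0)}{w(y_i)} \\
& \quad + \frac{1}{\sqrt{n}}\sum_{i=1}^{n} \delta_i \bx_i\{D_i - \pi(\bx_i,\alpha_0)\}\!\left\{\frac{1}{\widehat w(y_i)} - \frac{1}{w(y_i)}\right\},
\end{align*}
the first piece being a zero-mean i.i.d.\ sum (with summand $\psi_i$, say) whose variance comes from the identity used to derive (\ref{eq:PSEE}). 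For the second piece, I would use the martingale representation $\sqrt{n}\{\widehat S_C(s) - S_C(s)\} = n^{-1/2}\sum_j \xi_j(s) + o_P(1)$, uniformly on the relevant interval, integrate to a corresponding representation for $\sqrt{n}\{\widehat w(y) - w(y)\}$, and combine the linearization $1/\widehat w - 1/w \approx -(\widehat w - w)/w^2$ with a $U$-statistic-style interchange of inner and outer summations to produce a second i.i.d.\ contribution, $n^{-1/2}\sum_i \phi_i + o_P(1)$. Adding the pieces and applying the multivariate central limit theorem then gives $\sqrt{n}(\widehat\alpha - \alpha_0) \CiD \Normal(0,\eta(\alpha_0))$ with $\eta(\alpha_0) = A(\alpha_0)^{-1}\Var(\psi_i + \phi_i)\,A(\alpha_0)^{-1}$. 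The central challenge is controlling the cross-term remainder in this linearization: it requires $w$ bounded away from zero on the observed support, the uniform $n^{-1/2}$ rate for the Kaplan--Meier estimator, and a tightness argument for the associated empirical process, which is precisely where the regularity conditions $C1$--$C6$ will be doing the substantive work.
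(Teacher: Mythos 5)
Your proposal is correct and follows essentially the same route as the paper: linearize $1/\widehat{w}$ around $1/w$, use the martingale (influence-function) representation of the Kaplan--Meier-based $\widehat{w}$ to project the weight-estimation error onto an i.i.d.\ term (the paper's $\kappa_1$ integral is exactly your $\phi_i$), and then apply the CLT and a Taylor expansion of the estimating equation, with your $A(\alpha_0)$ coinciding with the paper's $\Lambda$ since $\partial\pi/\partial\alpha=\pi(1-\pi)\bx$. The only discrepancy is cosmetic: your sandwich form $A^{-1}\Var(\psi_i+\phi_i)A^{-1}$ is the conventional one, whereas the paper writes $\eta(\alpha)=\Lambda'\Sigma^{-1}\Lambda$, which is its inverse and appears to be a typographical slip rather than a substantive difference.
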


\begin{proof} See the Appendix.  \end{proof}
A consistent plug-in estimator of $\eta(\alpha)$ is presented in the Appendix.

\section{Simulation Studies}
\label{sec:sim}
In this Section, we describe a simulation study to examine the performance of
the proposed propensity score estimator. Our simulation consists of 500 datasets of sizes 500 and 5000. The censoring variable $C$ is generated from a uniform distribution in the interval $(0,\tau)$ where the parameter $\tau$ is set such that it results in a desired censoring proportion. To create length-biased samples, we generate a variable $A$ from a uniform distribution $(0,\rho)$ and ignore those whose generated unbiased failure time is less than $A$.

 We generated the unbiased failure times from the following hazard model  $h(t|d,\bx)=0.2 \exp\{d-0.5 x_1+0.5x_2+0.5dx_1-0.5dx_2\},$ where $
D \sim \textsf{Bernoulli} \left(\frac{\exp\{-0.1+1x_1-1x_2\}}{1+\exp\{-0.1+1x_1-1x_2\}}\right)$ with $X_1$ and $X_2$ distributed according to $N(0,\sigma=0.5)$. We estimate the parameters of the propensity score using CW and the proposed method and compare the results with the true values. We assume three different censoring proportions 10\%, 20\% and 30\%.

\begin{table*}
\centering
 \def\~{\hphantom{0}}
 \begin{minipage}{130mm}
\caption{\label{tab:CoxATE} Simulation: Propensity score parameter estimation. $\hat \alpha$: Estimated parameters using the proposed method. $\hat \alpha_w$: Estimated parameters using the CW method. $\hat \alpha_w^m$: Estimated parameters using the CW method when the hazard model is misspecified. $\hat \alpha_{Un}$: Estimated parameters when unadjusted for the length-biased sampling. $\alpha$=(-0.1,1,-1).}
\begin{tabular}{*{3}{c} |*{2}{c}} \hline
Method & Bias &  S.D.  & Bias &S.D.  \\ \hline
10 \% Cens.   & \multicolumn{2}{c}{$n=500$} & \multicolumn{2}{c}{$n=5000$}\\
$\hat \alpha$ &(0.01,0.04,0.03) &(0.21,0.43,0.45) &(0.00,0.01,0.01) & (0.09,0.18,0.19) \\
$\hat \alpha_w$ &(0.08,0.02,0.01) &(0.17,0.28,0.28) &(0.01,0.00,0.01)  &  (0.06,0.10,0.10)   \\
$\hat \alpha_w^m$ &(0.03,0.02,0.49) &(0.17,0.29,0.23) &(0.08,0.03,0.48) &  (0.06,0.09,0.08)   \\
$\hat \alpha_{Un}$ &(0.10,0.50,0.50) &(0.11,0.21,0.22) &(0.10,0.51,0.50) &  (0.04,0.07,0.08)   \\ \hline
20 \% Cens.   & \multicolumn{2}{c}{$n=500$} & \multicolumn{2}{c}{$n=5000$}\\
$\hat \alpha$ &(0.01,0.05,0.05) &(0.22,0.42,0.43)& (0.02,0.03,0.03)& (0.09,0.18,0.20) \\
$\hat \alpha_w$ &(0.02,0.01,0.01) &(0.17,0.29,0.27) &(0.02,0.01,0.01) &  (0.06,0.10,0.10)   \\
$\hat \alpha_w^m$ &(0.01,0.02,0.47) &(0.16,0.29,0.21) &(0.02,0.05,0.46) &  (0.06,0.10,0.08)   \\
$\hat \alpha_{Un}$ &(0.11,0.49,0.50)  &(0.10,0.22,0.20) &(0.10,0.51,0.50)&  (0.04,0.07,0.08)   \\ \hline
30 \% Cens.  & \multicolumn{2}{c}{$n=500$} & \multicolumn{2}{c}{$n=5000$}\\
$\hat \alpha$ &(0.04,0.08,0.09) &(0.22,0.44,0.44)& (0.03,0.06,0.07)& (0.10,0.19,0.20) \\
$\hat \alpha_w$ &(0.07,0.00,0.02) &(0.17,0.29,0.29) &(0.08,0.03,0.02) &  (0.06,0.10,0.11)   \\
$\hat \alpha_w^m$ &(0.07,0.02,0.45)  &(0.17,0.29,0.21) &(0.03,0.06,0.45) &  (0.06,0.10,0.08)   \\
$\hat \alpha_{Un}$ &(0.11,0.49,0.50)  &(0.10,0.22,0.20) &(0.10,0.51,0.50)&  (0.04,0.07,0.08)   \\ \hline
\end{tabular}
%\caption*{{ Unadjusted$^{lc}$: neither the length-biased nor the confounding are adjusted for.  Unadjusted$^c$: The length-biased is adjusted whereas confounding left unadjusted. Unadjusted$^l$: The confounding is adjusted whereas the length-biased left unadjusted. }}
\end{minipage}
\vspace*{+16pt}
\end{table*}

We estimate the parameters of the propensity score using  four different estimators: $\hat \alpha$ is the estimator obtained by the proposed method; $\hat \alpha_w$ and  $\hat \alpha_w^m$ are the estimator obtained by the CW method when the hazard model is correctly and incorrectly specified, respectively, and $\hat \alpha_{Un}$ is obtained by a naive method that does not adjust for the length-bias sampling. In $\hat \alpha_w^m$, we assume that the interaction between the treatment $D$ and the covariate $X_2$ has been ignored in the fitted hazard model.

Table \ref{tab:CoxATE} summarizes the  estimated propensity score parameters and their standard errors.  Our simulation results confirm that the proposed estimating equation (\ref{eq:PSEE}) adjusts the length-biased sampling. The standard errors, however, are larger than the one obtained by  the CW method, which is the price we pay for relaxing the modeling assumption of the hazard model. As we expected, CW estimator is highly sensitive to the model misspecification even when just one of the interaction terms is ignored. Specifically,  when the interaction term between the treatment and variable $X_2$ is omitted in the fitted hazard model, the estimated coefficient corresponding to $X_2$ in the propensity score model is biased. In general, if variables in the study are correlated, then missing one variable in the hazard model may cause bias in the estimation of other variables in the propensity score model as well.

\section*{Acknowledgements}
This work was supported in part by NIDA grant P50 DA010075.

%  If your paper refers to supplementary web material, then you MUST
%  include this section!!  See Instructions for Authors at the journal
%  website http://www.biometrics.tibs.org

%\section*{Supplementary Materials}

%Web Appendix A, referenced in Section~\ref{s:model}, is available with
%this paper at the Biometrics website on Wiley Online
%Library.\vspace*{-8pt}

%  Here, we create the bibliographic entries manually, following the
%  journal style.  If you use this method or use natbib, PLEASE PAY
%  CAREFUL ATTENTION TO THE BIBLIOGRAPHIC STYLE IN A RECENT ISSUE OF
%  THE JOURNAL AND FOLLOW IT!  Failure to follow stylistic conventions
%  just lengthens the time spend copyediting your paper and hence its
%  position in the publication queue should it be accepted.

%  We greatly prefer that you incorporate the references for your
%  article into the body of the article as we have done here
%  (you can use natbib or not as you choose) than use BiBTeX,
%  so that your article is self-contained in one file.
%  If you do use BiBTeX, please use the .bst file that comes with
%  the distribution.

\bibliographystyle{Biometrika}
\bibliography{mybibDB}

\appendix

%  To get the journal style of heading for an appendix, mimic the following.

\section*{Appendix}

In this section, we present the assumptions and proofs of the main result. The following conditions are required for establishing Theorem  \ref{th:PSas}:
\begin{enumerate}
%\item[C.1] $\pi(\bX,\alpha) $ is a continuously differentiable function.
\item[C.1] $\bX$ is a  $p \time 1$ vector of bounded covariates, not contained in a $(p-1)$ dimensional hyperplane.
\item[C.2] $\sup[t: p(R>t)>0] \geq \sup[t: p(C>t)>0]$=$s$ and $p(\delta=1)>0$.

\item[C.3] $\int_0^s[(\int_t^s S_C(v)dv)^2/(S^2_C(t)S_V(t))]dS_C(t)<\infty$.

\item[C.4] det $\E \left[  \left\{ \delta \bX^\top \frac{D-\pi(\bX,\alpha)}{ w(Y)}\right\}^{\otimes 2} \right]<\infty$.

\item[C.5] $\Lambda= \E \left[ \delta \bX^\top \frac{\partial \pi(\bX,\alpha)}{\partial \alpha}  \right]$ is nonsingular.

\item[C.6] det [$\int_0^s \kappa_1^2(t)/(S_C^2(t)S_R(t))dS_C(t)]<\infty$ where $\kappa_1(t)=\E\left[ \frac{ \delta \I(Y>t) \bX^\top [D -\pi(\bX,\alpha))]\int_t^Y S_C(v)dv}{ w^2(Y)}\right]$, 
\end{enumerate}
C.2 is an identifiability condition (\cite{wang1991nonparametric}), C.3-C.6 are required to obtain an estimator
with a finite variance.

\bigskip

%%%%%%%%%%%%%%%%%%%%%%%%%%%%%%%%%%%%%%%%%%%%%%%%%%%%%%%%%%%%%By the no unmeasured confounder assumption and the independence of $D$ and $(\delta=1,Y(1),Y(0))$ given $\bX$, we have%%%%%%%%%%%%%%%%%%%%%%%%%%%%%%%%%%%%%%%%%%%%%%%%%%%%%%%%%%%%%%%THEOREM%%%%%%%%%%%%%%%%%%%%%%%%%%%%%%%%%%%%%%
\noindent \textbf{\it{Proof of Theorem \ref{th:PSas}}}. Using the strong consistency of $\widehat w(y)$ to $w(y)$
(\cite{pepe1991weighted}), we have
\begin{align*}
\frac{1}{\widehat w(Y)} = \frac{1}{ w(Y)} \left[  1+\frac{w(Y)-\widehat w(Y)}{w(Y)}  \right]+o_p(1).
%\frac{\delta_j \I(Y_j\geq u)}{\widehat w(Y_j)}&=
%\left[\frac{\delta_j \I(Y_j\geq u)}{ w(Y_j)}\right] \left[  1+\frac{w(Y_j)-\widehat w(Y_j)}{w(Y_j)}  \right]+o_p(1),
\end{align*}
Thus
\begin{align}
\tilde U(\alpha)&=\frac{1}{\sqrt n}\sum_{i=1}^n U_i(\alpha) \nonumber\\
&= \frac{1}{\sqrt n}\sum_{i=1}^n \delta_i \bx_i^\top \frac{d_i-\pi(\bx_i,\alpha)}{ \hat w(y_i)} =\frac{1}{\sqrt n}  \sum_{i=1}^n \delta_i \bx_i^\top \frac{d_i-\pi(\bx_i,\alpha)}{ w(y_i)} \left[  1+\frac{w(y_i)-\widehat w(y_i)}{w(y_i)}  \right]+o_p(1) ,
\label{eq:utilde}
\end{align}
In section \ref{se:PSest}, we have shown that the estimating equation $ \tilde U(\alpha)$ given by (\ref{eq:PSEE}) is unbiased. Following the martingale integral representation
$\sqrt n (\widehat w(Y)-w(Y))$ (\cite{shen2009analyzing} and \cite{qin2010statistical}),
\begin{align}
 \frac{w(y_i)-\widehat w(y_i)}{w(y_i)}= \E \left[  \int_0^s \frac{\I(y_i>t)\int_t^{y_i} S_C(v)dv}{ S_C(t)S_R(t) w(y_i)}dM_C(t)    \right],
 \label{eq:pep}
\end{align}
where $M_C(s)=\I(Y-A<s,\delta=0)-\int_0^s \I(\min(Y-A,C)>u) \: d\Lambda_C(u)$, with $\Lambda_C(.)$ be the cumulative hazard function of the censoring variable. The stochastic process $M_C(s)$ has mean zero,
\begin{align*}
\E[M_C(s)] &= \E[\I(C<Y-A<s )] -  \int_0^s  \E[ \I(Y-A>u). \I(C>u) ]\: d\Lambda_C(u) \\
             &= \int_0^s S_C(u) \lambda_C(u) S_R(u) du - \int_0^s S_C(u) S_R(u) d\Lambda_C(u)=0.
\end{align*}

By inserting (\ref{eq:pep}) into (\ref{eq:utilde}) and using the standard Taylor expansion, we can derive the asymptotic variance of the estimator as follows
\[
\eta(\alpha) = \Lambda' \Sigma^{-1} \Lambda,
\]
where
\begin{align*}
\Sigma=\E\left[\tilde U(\alpha) \tilde U(\alpha)Õ\right]&=\E \left[  \left\{ \delta \bX^\top \frac{D-\pi(\bX,\alpha)}{ w(Y)}\right\}^{\otimes 2} \left\{  1+\frac{w(Y)-\widehat w(Y)}{w(Y)} \right\}^{2}  \right] \\
     &=\E \left[  \left\{ \delta \bX^\top \frac{D-\pi(\bX,\alpha)}{ w(Y)}+\int_0^{s} \frac{\kappa_1(t)dM_C(t)}{S_C(t)S_R(t)} \right\}^{\otimes 2}  \right] \\
\Lambda&= \E \left[ \frac{\partial  U_i(\alpha)}{\partial \alpha}  \right] = \E \left[ \frac{\delta}{w(Y)} \bX^\top \frac{\partial \pi(\bX,\alpha)}{\partial \alpha} \right]
\end{align*}
where $\kappa_1(t)=\E\left[ \frac{ \delta \I(Y>t) \bX^\top [D -\pi(\bX,\alpha))]\int_t^Y S_C(v)dv}{ w^2(Y)}\right]$. Let $\P_n$ be the empirical average. 
The components of the variance-covariance matrix $\eta(\alpha)$ can be consistently estimated by
\begin{align*}
 \hat \Sigma    &=\P_n \left[  \left\{ \delta \bX^\top \frac{D-\pi(\bX,\alpha)}{ \hat w(Y)}+\int_0^{s} \frac{\hat \kappa_1(t)d\hat M_C(t)}{\hat S_C(t)\hat S_R(t)} \right\}^{\otimes 2}  \right], \\
\hat \Lambda&= \P_n \left[ \frac{\partial  U_i(\alpha)}{\partial \alpha}  \right] = \P_n \left[ \frac{\delta}{\hat w(Y)} \bX^\top \frac{\partial \pi(\bX,\alpha)}{\partial \alpha} \right],
\end{align*}
Also, the stochastic process $M_C(s)$ can be estimated by replacing the $\Lambda_C(.)$ by its
estimate, $\widehat \Lambda_C(.)$.

\label{lastpage}

\end{document}